\documentclass[a4paper,12pt,twoside]{article}
\usepackage{amsfonts}
\usepackage{rotating}
\usepackage{chapterbib}
\usepackage{graphicx, color}
\usepackage[latin1]{inputenc}
\usepackage[english]{babel}
\usepackage{amssymb}
\usepackage{amsmath, eurosym, layout, hyperref} 
\usepackage{times}
\usepackage{amsthm}
\usepackage{makeidx}
\makeindex
\usepackage{linegoal}

\setlength{\parskip}{0.3\baselineskip}
\newcommand{\IR}{\mathbb{R}}
\newcommand{\IZ}{\mathbb{Z}}
\newcommand{\IN}{\mathbb{N}}

\newcommand{\IE}{\mathbb{E}}
\newcommand{\IP}{\mathbb{P}}

\newcommand{\sD}{\mathcal{D}}
\newcommand{\sA}{\mathcal{A}}

\newcommand{\sN}{\mathcal{N}}
\newcommand{\sZ}{\mathcal{Z}}
\newcommand{\sT}{\mathcal{T}}

\newcommand{\ub}[1][i]{\underline{\boldsymbol{#1}}}

\newcommand{\nt}{\notag\\}

\newcommand{\convw}{\Longrightarrow}
\newcommand{\convd}{\stackrel{\sD}{\Longrightarrow}}
\newcommand{\goto}{\longrightarrow}

\newtheorem{theorem}{Theorem}
\newtheorem{proposition}[theorem]{Proposition}
\newtheorem{lemma}[theorem]{Lemma}

\theoremstyle{definition}
\newtheorem{remark}[theorem]{Remark}

\newtheorem{definition}[theorem]{Definition}

\newcommand{\bew}{\noindent\textbf{Proof:}\quad}
\newcommand{\ebew}{\hfill\qed\\}

\newsavebox{\prN}

\renewenvironment{proof}{\bew}{\ebew}

\newenvironment{balign*}[1][12pt]{\setlength{\jot}{#1}\nonumber\align}{\endalign}

\renewcommand{\epsilon}{\varepsilon}

\setlength{\parindent}{0mm}
\addtolength{\parskip}{2pt}

\newcommand{\dotsfi}{\quad\leaders\hbox to 25pt{\hss\tiny.\hss}\hfill}

\newcommand{\eh}[1][2]{\frac{1}{#1}}

\title{The Curie-Weiss model\\[3mm] - an approach using moments }
\author{Werner Kirsch\footnote{\texttt{werner.kirsch@fernuni-hagen.de}} \\
Fakult\"{a}t f\"{u}r Mathematik und Informatik\\
FernUniversit\"{a}t in Hagen, Germany\\[3mm]}

\begin{document}
\addtolength{\baselineskip}{2pt}
\maketitle

\section{Introduction}
In this paper, we consider one of the easiest models for magnetism, the Curie-Weiss model. In this model the elementary magnets can take values $+1$ (spin up) and $-1 $ (spin down). Each spin interacts with all the other spins with the same strength. This interaction makes it more likely for two spins to have the same value than to assume opposite values.

More precisely, the spins $X_{1}, \ldots, X_{N}$ are $\{-1,+1\}$-valued random variables. As typical in models of
statistical mechanics, the (joint) probability
distribution of the $X_{1},X_{2},\ldots,X_{N} $ is defined via
a function $H:\{-1,+1\}^{N}\goto\IR$, called the \emph{energy} ( or Hamiltonian),
by the expression
\begin{align}\label{eq:Gibbs}
   \IP\big(X_{1}=x_{1},X_{2}=x_{2},\ldots,X_{N}=x_{N}\big)~=~
   Z^{-1}\;e^{-\beta H(x_{1},x_{2},\ldots,x_{N})}\,,
\end{align}
where $Z$ is a normalization constant to make $\IP$ a probability measure, i.~e.
\begin{align}
   Z~=~\sum_{(x_{1},x_{2},\ldots,x_{N})\in\{-1,+1\}^{N}}
   \;e^{-\beta H(x_{1},x_{2},\ldots,x_{N})}\,.
\end{align}

The parameter $\beta\geq 0 $ plays the role of an inverse `temperature' $T$, $\beta=\eh[T] $. If $\beta=0$, which means $T=\infty$, the random variables $X_{1},X_{2},\ldots,X_{N} $ are actually independent. If $\beta>0$ those $X_{1},X_{2},\ldots,X_{N} $
which minimize $H$ have higher probability. In other words: The
system prefers states with low energy. This preference is more and
more enhanced if $\beta $ grows.

The details of the model under consideration are encoded in the
energy function $H$.
As a rule, $H$ is of the form
\begin{align}
   H(x_{1},x_{2},\ldots,x_{N})~=~-\sum_{i,j=1}^{N} J_{i,j} x_{i} x_{j}\,.
\end{align}
If all $J_{i,j}\geq 0$ (and not all $=0$) the minimum of the energy is attained if
the $X_{i}$ are `aligned', i.~e. all $X_{i}=1$ or all $X_{i}=-1$. Thus, those `configurations' with many $X_{i}=1$ (or with many
$X_{i}=-1$) are more likely than those with almost equal number of $+1$ and $-1$.
Such models are called \emph{paramagnetic}.

Presumably, the most famous example is the
energy function of the Ising model. In this model the indices $i$
of the random variables $X_{i}$ come from a finite subset $I$ of
the lattice $\IZ^{d}$ and the coupling constants $J_{i,j}$ are given
by
\begin{align}
   J_{i,j}~=~\left\{
               \begin{array}{ll}
                 1, & \hbox{if $\|i-j\|=1 $;} \\
                 0, & \hbox{otherwise.}
               \end{array}
             \right.
\end{align}
So, in the Ising model only spins which are nearest neighbors
interact with each other.

In this paper we consider the easiest non trivial model of
magnetisms, the Curie-Weiss model. In this system every spin
interact with every other spin, more precisely the spin $X_{i}$
interacts with the average of all spins, namely:
\begin{align}
   H(x_{1},x_{2},\ldots,x_{N})~&=~-\eh\,\sum_{i=1}^{N}x_{i}\cdot \Big(\eh[N]\sum_{j=1}^{N} x_{j}\Big)\notag\\
&=~-\eh[2N]\sum_{i,j=1}^{N} x_{i} x_{j}\,.
\end{align}

The Curie-Weiss model is interesting since it is accessible
to mathematical method (even not too sophisticated ones) and
yet has a number of
interesting properties physicists expect of a paramagnetic
system, like a phase transition from a purely paramagnetic
phase to a ferromagnetic phase. We will explain this in detail
in the next section.

The \emph{results} we describe and prove below are not new, but rather
well known to the community. However, the \emph{proofs} we present are certainly
not standard, and rather elementary. We use the moment method
to prove both a `law of large numbers' as well as a `central limit
theorem' and a `non-central limit theorem'.

The Curie-Weiss model goes back to Pierre Curie and Pierre Weiss. A systematic mathematical treatment
can be found in \cite{Thompson} and \cite{Ellis}. For the vast literature on the model see the references
in \cite{Ellis} We refer in particular to \cite{EN1} and \cite{EN2}.

Recently there has been increasing interest in proving limit results for Curie-Weiss models with two or more
groups, see \cite{CG, FC, FM, Collet, LS, KT1, KT2, KLSS}.

Besides describing magnetic systems the Curie-Weiss model
is also used to model voting behavior in various election
models, where $X_{i}=1$ (resp. $X_{i}=-1$) means the voter
$i$ votes `yes' (resp. `no') . The basic idea is that voters
tend to vote in a similar way as the other voters in their
constituency (see \cite{Kirsch-HOEC},\cite{CGh},\cite{KL-LSE}, \cite{Kirsch-EMS}).

\section{Definitions and Results}

\begin{definition}
For $N\in\IN$ and $x_{1},x_{2},\ldots,x_{N}\in\{-1,+1\}$ set
\begin{align}
   H_{N}(x_{1},x_{2},\ldots,x_{N})~=~-\eh[2N]\Big(\sum_{i=1}^{N} x_{i}\Big)^{2}\,.
\end{align}
The Curie-Weiss distribution $CW(\beta,N $) is the probability measure $\IP_{\beta,N}$
on $\{-1,+1\}^{N} $ defined by
\begin{align}
   \IP_{\beta,N}(\{ (x_{1},x_{2},\ldots,x_{N}) \})~&=~Z^{-1}\;e^{-\beta H_{N}(x_{1},x_{2},\ldots,x_{N})}\notag\\
&=~Z^{-1}\;e^{-\frac{\beta}{2N} \big(\sum_{i=1}^{N} x_{i}\big)^{2} }\,.
\end{align}
Here, $\beta\geq 0 $ is called the inverse temperature and $Z$ is a normalization constant so that $\IP_{\beta,N}$
is a probability, i.~e.
\begin{align}
   Z~=~\sum_{x_{1},x_{2},\ldots,x_{N}=1}^{N} e^{-\frac{\beta}{2N} \big(\sum_{i=1}^{N} x_{i}\big)^{2} }\,.
\end{align}
By $\IE_{\beta,N}$ we denote the expectation with respect to the probability measure $\IP_{\beta,N}$.

   We say that a sequence $X_{1},X_{2},\ldots,X_{N} $ of
   $\{-1,+1\}$-valued random
   variables on a probability space $(\Omega,\sA,\IP)$ is \emph{Curie-Weiss distributed} with inverse
   temperature $\beta=\eh[T]\geq 0$\   (or $CW(\beta,N$)-distributed) if
\begin{align}
\IP\Big(X_{1}=x_{1},X_{2}=x_{2},\ldots,X_{N}=x_{N}\Big)
   &=~Z^{-1}\;e^{-\frac{\beta}{2N} \big(\sum_{i=1}^{N} x_{i}\big)^{2} }\,.
\end{align}
If $X_{1},X_{2},\ldots,X_{N} $ are $CW(\beta,N $)-distributed we
call
\begin{align}
   S_{N}~:=~\sum_{i=1}^{N} X_{i}
\end{align}
the \emph{total magnetization} of the $X_{1},X_{2},\ldots,X_{N} $.

\end{definition}

\begin{remark}
Suppose $X_{1},X_{2},\ldots,X_{N} $ are $CW(\beta,N $)-distributed random variable.
Since the function $H_{N}$ is invariant under permutation of its arguments, the random
variables $X_{1},X_{2},\ldots,X_{N} $ are exchangeable. In particular, $\IE_{\beta,N }(X_{i}X_{j})=\IE_{\beta,N }(X_{1}X_{2})$
for $i\not=j$.
Moreover, $\IE_{\beta,N}(X_{i})=0$, as $\IP_{\beta,N}(X_{i}=\pm1)=\eh $ and $\IE_{\beta,N}(X_{i}^{2})=1$,
in fact $X_{i}^{2}=1 $.
\end{remark}

In the following we will be concerned with a scheme of random
variables
\begin{align}
  X^{(N)}_{i},\text{\quad with \quad}N=1,2\ldots\text{\quad and\quad} i=1,2,\ldots, N
\end{align}
such that the sequence $X^{(N)}_{1},X^{(N)}_{2},\ldots,X^{(N)}_{N}$
is $CW(\beta,N) $-distributed.

We will be interested in the behavior
of $S^{(N)}_{N}=\sum_{i=1}^{N}X^{(N)}_{i}$.

Note, that the joint distributions of, say, $X^{(N)}_{1}, X^{(N)}_{2}$
and of $X^{(M)}_{1}, X^{(M)}_{2}$ are \emph{different} for $N\not=M$,
since the distribution CW($\beta,N$) depends explicitly on $N$.
In fact, a priori, $X^{N}_{1}$ and $X^{M}_{1}$ are defined on
different probability spaces, so that it doesn't make sense to speak of quantities like $\IE(X^{(N)}_{i}X^{(M)}_{j})$.

With this being said, from now on we drop the superscript $(N)$ and $(M)$ and simply write
\begin{align}
   S_{N}~&=~\sum_{i=1}^{N} X_{i}\\
\intertext{instead of}S^{(N)}_{N}~&=~\sum_{i=1}^{N} X^{(N)}_{i}
\end{align}
whenever it is clear which $N$ is meant. This is an abuse of notation, but a very convenient one.

The first result is a kind of a `law of large numbers'.

\begin{theorem}\label{thm:lln}
   Suppose $X_{1},X_{2},\ldots,X_{N}$ are CW($\beta,N) $-distributed random variables and set $S_{N}=\sum_{i=1}^{N} X_{i}$.
\begin{enumerate}
   \item If $\beta\leq 1$, then
        \begin{align}
           \eh[N]S_{N}~=~\eh[N]\sum_{i=1}^{N} X_{i}~\convd~\delta_{0}\,,
        \end{align}
        where $\convd $ denotes convergence in distribution and
        $\delta_{a}$ is the Dirac measure in $a$.
    \item
       If $\beta>1$ then
        \begin{align}
           \eh[N]S_{N}~=~\eh[N]\sum_{i=1}^{N} X_{i}~\convd~\eh\,\big(\delta_{-m(\beta)}+\delta_{-m(\beta)}\big)\,,
        \end{align}
    where $m(\beta)>0$ is the unique positive solution of the equation
\begin{align}
   x=\tanh(\beta x)\,.
\end{align}
\end{enumerate}
\end{theorem}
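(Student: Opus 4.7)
The plan is to use the method of moments: compute $\IE_{\beta,N}(X_{1}\cdots X_{k})$ in the limit $N\to\infty$, lift this via exchangeability to a limit for $\IE_{\beta,N}((S_{N}/N)^{k})$, and identify the limit distribution through its moments.

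\textbf{Integral representation.} The starting point is the Hubbard--Stratonovich identity $e^{a^{2}/2}=(2\pi)^{-1/2}\int e^{ax-x^{2}/2}\,dx$ applied with $a=S_{N}\sqrt{\beta/N}$: this linearises the quadratic Gibbs weight, so the sum over $\{-1,+1\}^{N}$ factorises into a product of $\cosh$'s and $\sinh$'s. After the rescaling $x=t\sqrt{N/\beta}$ one obtains
\begin{align}
\IE_{\beta,N}(X_{1}\cdots X_{k})\;=\;\frac{\int_{\IR}\tanh^{k}(t)\,e^{NF(t)}\,dt}{\int_{\IR}e^{NF(t)}\,dt},\qquad F(t):=\log(2\cosh t)-\frac{t^{2}}{2\beta}.
\end{align}

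\textbf{Laplace analysis.} From $F'(t)=\tanh t-t/\beta$ the critical points solve $t=\beta\tanh t$, equivalently $m=\tanh(\beta m)$ with $m=\tanh t$. For $\beta\leq 1$ the function $F$ attains its unique global maximum at $t=0$; for $\beta>1$ it has two symmetric global maxima at $\pm t^{*}$ with $\tanh(t^{*})=m(\beta)$, while $t=0$ becomes a local minimum. A standard Laplace argument therefore yields
\begin{align}
\lim_{N\to\infty}\IE_{\beta,N}(X_{1}\cdots X_{k})\;=\;\begin{cases} 0 & \text{if }\beta\leq 1,\\[2pt] \tfrac{1}{2}\big(m(\beta)^{k}+(-m(\beta))^{k}\big) & \text{if }\beta>1,\end{cases}
\end{align}
the two wells in the second case contributing equally by the symmetry $t\leftrightarrow -t$.

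\textbf{From $k$-point functions to moments of $S_{N}/N$.} Expanding, $\IE_{\beta,N}((S_{N}/N)^{k})=N^{-k}\sum_{i_{1},\ldots,i_{k}}\IE_{\beta,N}(X_{i_{1}}\cdots X_{i_{k}})$. Since $X_{i}^{2}=1$ and the spins are exchangeable, the value of $\IE_{\beta,N}(X_{i_{1}}\cdots X_{i_{k}})$ depends only on the number $r\leq k$ of indices appearing with odd multiplicity, and then equals $\IE_{\beta,N}(X_{1}\cdots X_{r})$. Among the $N^{k}$ tuples, the $N(N-1)\cdots(N-k+1)\sim N^{k}$ with all entries distinct (i.e.\ $r=k$) contribute the full value $\IE_{\beta,N}(X_{1}\cdots X_{k})$; the remaining $O(N^{k-1})$ tuples contribute only $O(1/N)$, since every factor is bounded by $1$. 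Hence $\IE_{\beta,N}((S_{N}/N)^{k})$ has the same limit as $\IE_{\beta,N}(X_{1}\cdots X_{k})$. These limits are exactly the moments of $\delta_{0}$ for $\beta\leq 1$ and of $\frac{1}{2}(\delta_{m(\beta)}+\delta_{-m(\beta)})$ for $\beta>1$; both target laws are compactly supported and thus determined by their moments, so convergence of moments yields convergence in distribution.

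The principal obstacle is the Laplace analysis in the two-well regime $\beta>1$: one must justify that the two global maxima at $\pm t^{*}$ dominate both numerator and denominator, contribute asymptotically equal Gaussian masses, and that the intermediate local minimum at $t=0$ is negligible. The critical case $\beta=1$ requires a touch of extra care, since $F''(0)=0$ and the leading behaviour at the maximum is controlled by $F^{(4)}(0)=-2$ (so the effective Gaussian width is $N^{-1/4}$ instead of $N^{-1/2}$); however the target value $\tanh^{k}(0)=0$ is zero anyway, so any standard quantitative form of Laplace's method still delivers the asserted limit of $0$.
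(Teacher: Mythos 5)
Your proposal is correct and follows essentially the same route as the paper: the Hubbard--Stratonovich representation of the correlations $\IE_{\beta,N}(X_{1}\cdots X_{k})$ as a ratio of integrals, Laplace asymptotics distinguishing the single well ($\beta\leq 1$) from the two symmetric wells ($\beta>1$), reduction of the moments of $S_{N}/N$ to the all-distinct-index correlations, and the method of moments to conclude. The only (harmless) difference is that you dispose of the non-distinct tuples by the crude count $O(N^{k-1})$ together with $|\IE_{\beta,N}(X_{i_{1}}\cdots X_{i_{k}})|\leq 1$, which for this theorem is actually a slight simplification of the paper's bound $w_{K,N}(r)\leq K!\,N^{(K+r)/2}$.
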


Theorem \ref{thm:lln} shows that there is a phase transition
at inverse temperature $\beta=1$, in the sense that the Curie-Weiss system changes its behavior drastically at $\beta=1$. Up to this
point a `law of large numbers' holds: The arithmetic mean of the
spins goes to zero (= the expectation value of $X_{i}$). Above $\beta=1$ the
limiting distribution of the normalized sum of the spins has two
peaks.

We remark that the convergence for $\beta\leq 1 $ can be strengthened to convergence in probability if we realize
all random variables of the same probability space.

Given the law of large numbers in Theorem \ref{thm:lln} one
may hope that there is a central limit theorem for $\beta\leq 1 $.
This is indeed the case for $\beta< 1 $.

\begin{theorem}\label{thm:clt}
   Suppose $X_{1},X_{2},\ldots,X_{N}$ are CW($\beta,N) $-distributed random variables. If $\beta<1 $ then
\begin{align}\label{eq:clt}
   \eh[\sqrt{N}]S_{N}~=~\eh[\sqrt{N}]\sum_{i=1}^{N} X_{i}~~
    \convd~~\sN(0,\eh[1-\beta])\,,
\end{align}
where $\sN(\mu,\sigma^{2}) $ denotes the normal distribution with
mean $\mu $ and variance $\sigma^{2} $.
\end{theorem}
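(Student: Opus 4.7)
The plan is to use the method of moments: for every integer $k\ge 0$, I will show that
\begin{equation*}
a_k^{(N)} \;:=\; \IE_{\beta,N}\!\left[\Big(\tfrac{S_N}{\sqrt{N}}\Big)^{\!k}\right]
\end{equation*}
converges as $N\to\infty$ to the $k$-th moment $m_k$ of $\sN(0,1/(1-\beta))$, that is, $m_{2\ell+1}=0$ and $m_{2\ell}=(2\ell-1)!!\,(1-\beta)^{-\ell}$. Since the normal law is determined by its moments, this forces convergence in distribution.

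The engine is the exact conditional expectation
\begin{equation*}
\IE_{\beta,N}[X_1\mid X_2,\dots,X_N] \;=\; \tanh\!\Big(\tfrac{\beta}{N}\,S'_N\Big),\qquad S'_N := S_N - X_1,
\end{equation*}
which is immediate from $\IP_{\beta,N}(X_1=\pm 1\mid X_2,\dots,X_N)\propto e^{\pm\beta S'_N/N}$. Together with exchangeability, this yields $\IE_{\beta,N}[S_N^k]=N\,\IE_{\beta,N}[X_1 S_N^{k-1}]$. Expanding $S_N^{k-1}=(X_1+S'_N)^{k-1}$ by the binomial theorem and using $X_1^2=1$ to collapse every even power of $X_1$ to $1$, each surviving odd power of $X_1$ gets replaced by $\tanh(\beta S'_N/N)$. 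A Taylor expansion $\tanh(\beta x/N)=\beta x/N+O((x/N)^3)$ plus the routine identity $\IE_{\beta,N}[(S'_N)^m]/N^{m/2}=a_m^{(N)}+O(N^{-1})$ (from $S'_N=S_N-X_1$ with $|X_1|=1$) then reduces everything to normalized $S_N$-moments.

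Careful power counting after division by $N^{k/2}$ shows that only two terms survive at leading order, giving the recursion
\begin{equation*}
a_k^{(N)} \;=\; \beta\,a_k^{(N)} \;+\; (k-1)\,a_{k-2}^{(N)} \;+\; \varepsilon_N^{(k)}.
\end{equation*}
Here $\beta a_k^{(N)}$ arises from the $X_1(S'_N)^{k-1}$-summand via the linear part of $\tanh$, $(k-1)a_{k-2}^{(N)}$ from the $X_1^2(S'_N)^{k-2}$-summand, and $\varepsilon_N^{(k)}$ is a polynomial in the lower moments $a_0^{(N)},\dots,a_{k-1}^{(N)}$ multiplied by strictly negative powers of $N$. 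Since $\beta<1$, I can invert $1-\beta$ to obtain
\begin{equation*}
a_k^{(N)} \;=\; \frac{k-1}{1-\beta}\,a_{k-2}^{(N)} \;+\; \widetilde\varepsilon_N^{(k)},
\end{equation*}
whose limit, if $\widetilde\varepsilon_N^{(k)}\to 0$, is exactly the Gaussian recurrence $m_k=(k-1)(1-\beta)^{-1}m_{k-2}$.

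The argument is then closed by strong induction on $k$: the bases are $a_0^{(N)}=1$ and $a_{2\ell+1}^{(N)}=0$ (by the $X_i\mapsto-X_i$ symmetry of $\IP_{\beta,N}$), and the inductive step simultaneously delivers uniform-in-$N$ boundedness of $a_k^{(N)}$ and its convergence to $m_k$. The main obstacle I anticipate is controlling $\varepsilon_N^{(k)}$ rigorously: the Taylor remainder $\tanh(y)-y=O(y^3)$ is only valid for small $y$, whereas we integrate against the full CW measure. I plan to split the expectation into the regime $|S'_N|\le\eta N$ (where the cubic Taylor bound applies and contributes $O(N^{-1})$ after normalization, by the inductive boundedness of lower moments) and the exterior, which is controlled by the exponential concentration of $S_N/N$ about $0$ for $\beta<1$ that already underlies Theorem~\ref{thm:lln}. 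The fact that the coefficient of $a_k^{(N)}$ in the recursion is precisely $\beta$ means the inversion by $1-\beta$ succeeds exactly when $\beta<1$; this analytic obstruction already pinpoints why the Gaussian CLT must fail at the critical temperature.
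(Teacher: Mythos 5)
Your approach is genuinely different from the paper's: instead of the Hubbard--Stratonovich representation, Laplace asymptotics for the correlations $\IE_{\beta,N}(X_1\cdots X_\ell)$, and combinatorial bookkeeping over multi-indices, you derive a self-consistent recursion for the normalized moments from the exact conditional expectation $\IE_{\beta,N}[X_1\mid X_2,\dots,X_N]=\tanh(\beta S_N'/N)$. That identity is correct, the leading terms $\beta\,a_k^{(N)}$ and $(k-1)\,a_{k-2}^{(N)}$ are the right ones, and the resulting recursion does reproduce the moments of $\sN(0,\frac{1}{1-\beta})$; the inversion of $1-\beta$ correctly isolates where $\beta<1$ is used. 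This is an attractive and more elementary route that avoids Laplace's method entirely.

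There is, however, a concrete gap in your control of $\varepsilon_N^{(k)}$. You assert it is a polynomial in the \emph{lower} moments $a_0^{(N)},\dots,a_{k-1}^{(N)}$ times negative powers of $N$. But the dominant error comes from the $j=0$ summand: replacing $\tanh(\beta S_N'/N)$ by $\beta S_N'/N$ against $(S_N')^{k-1}$ produces, via the (in fact globally valid) bound $|\tanh y-y|\le |y|^3/3$, a remainder of order $N^{-2}\,\IE_{\beta,N}[|S_N'|^{k+2}]$, which after normalization is $N^{-1}$ times the \emph{$(k+2)$-nd} normalized moment --- not a lower one. So the strong induction on $k$ does not close as stated. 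Your proposed truncation at $|S_N'|\le\eta N$ does not yield $O(N^{-1})$ either: on that event the same bound gives $|\tanh(\beta S_N'/N)-\beta S_N'/N|\le \tfrac{\beta^3\eta^2}{3N}|S_N'|$, so the interior contribution is of order $\eta^2 a_k^{(N)}$, a small but $N$-independent multiple of the very moment you are trying to control. The fix is available but must be made explicit: either (i) first prove the a priori bound $\IE_{\beta,N}[S_N^{2m}]\le C_m N^m$ for all $m$, which follows by induction from the same recursion using only $|\tanh y|\le |y|$ (so that $(1-\beta)\IE_{\beta,N}[S_N^{2m}]$ is bounded by lower-order terms), after which the $N^{-1}a_{k+2}^{(N)}$ error is genuinely $o(1)$; or (ii) keep the truncation, absorb the $O(\eta^2)a_k^{(N)}$ term into the left-hand side before inverting (divide by $1-\beta-C\eta^2$ and let $\eta\to 0$ at the end). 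In either case you also owe a proof of the exponential smallness of $\IP_{\beta,N}(|S_N|>\eta N)$ for $\beta<1$: the paper's Theorem \ref{thm:lln} is itself proved by moments and does not supply this tail bound, though it is standard and can be extracted, e.g., from the Hubbard--Stratonovich representation.
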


It follows in particular that
\begin{align}
   \IE_{\beta,N}\left(\left(\eh[\sqrt{N}]\sum_{i=1}^{N}X_{i}\right)^{2}\right)~\goto~\eh[1-\beta]
\end{align}
while $\IE_{\beta,N}(X_{i}^{2})=1 $.

The above result suggests that for $\beta=1$ there is no `standard'
central limit theorem. Indeed, we have:

\begin{theorem}\label{thm:nclt}
Suppose $X_{1},X_{2},\ldots,X_{N}$ are $CW(1,N) $-distributed random variables. Then
\begin{align}\label{eq:nclt}
   \eh[N^{3/4}]S_{N}~=~\eh[N^{3/4}]\sum_{i=1}^{N} X_{i}~~
    \convd~~\mu
\end{align}
where $\mu $ is a measure with Lebesgue density $\rho(x)=C\,e^{-\eh[12]x^{4}}$.

\end{theorem}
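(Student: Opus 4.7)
The plan is to apply the moment method, as in the preceding theorems: I will show that, for each fixed $k\in\IN$, the moment $\IE_{1,N}\big((S_N/N^{3/4})^k\big)$ converges to $m_k := C\int_{\IR}x^k e^{-x^4/12}\,dx$. Because $\rho(x)=Ce^{-x^4/12}$ has super-exponentially decaying tails, all moments exist and $\mu$ is uniquely determined by them (Carleman's criterion), so convergence of moments implies the stated convergence in distribution.

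Since $H_N$ depends only on $S_N$, the computation reduces to a one-dimensional sum. Letting $j=\#\{i: X_i=+1\}$ and $y=j-N/2$, we have $S_N=2y$ and
\begin{equation*}
\IE_{1,N}(S_N^k) \;=\; \frac{1}{Z_N}\sum_{y}\binom{N}{N/2+y}(2y)^k e^{2y^2/N},\qquad Z_N=\sum_y\binom{N}{N/2+y} e^{2y^2/N}.
\end{equation*}
The key input is a sharpened Stirling expansion. A Taylor expansion of the binary Kullback--Leibler divergence around $1/2$ gives
\begin{equation*}
\mathrm{KL}\bigl(\tfrac12+\epsilon\,\big\|\,\tfrac12\bigr)\;=\;2\epsilon^2+\tfrac{4}{3}\epsilon^4+O(\epsilon^6),
\end{equation*}
so that, uniformly for $|y|\leq N/4$ (say),
\begin{equation*}
\binom{N}{N/2+y}\;=\;\frac{2^N}{\sqrt{\pi N/2}}\,\exp\!\left(-\frac{2y^2}{N}-\frac{4y^4}{3N^3}+O\!\Bigl(\tfrac{y^6}{N^5}+\tfrac{1}{N}\Bigr)\right).
\end{equation*}
The crucial observation is that at $\beta=1$ the exponential factor $e^{2y^2/N}$ cancels exactly the Gaussian contribution from the binomial, leaving only the quartic correction. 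This forces the correct scale: set $t=2y/N^{3/4}$, so that $4y^4/(3N^3)=t^4/12$ and the step size is $\Delta t=2/N^{3/4}$. Both numerator and denominator of the moment formula then become Riemann sums approximating $\int_{\IR} t^k e^{-t^4/12}\,dt$, and after dividing by $N^{3k/4}$ one obtains
\begin{equation*}
\frac{\IE_{1,N}(S_N^k)}{N^{3k/4}}\;\longrightarrow\;\frac{\int_{\IR} t^k e^{-t^4/12}\,dt}{\int_{\IR} e^{-t^4/12}\,dt}\;=\;m_k.
\end{equation*}

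The main technical obstacle is controlling the sum uniformly, not just pointwise. I need a tail estimate showing that the contribution of $|y|\geq N^{3/4+\delta}$ to both numerator and denominator is negligible compared to the bulk. This follows from a convexity/monotonicity argument: writing $g(\epsilon):=2\epsilon^2-\mathrm{KL}(\tfrac12+\epsilon\|\tfrac12)$, a direct computation shows $g''(\epsilon)=-16\epsilon^2/(1-4\epsilon^2)<0$, so $g(\epsilon)\leq -c\,\epsilon^4$ for $|\epsilon|\leq 1/2-\eta$, while the boundary contributions near $\epsilon=\pm 1/2$ are trivially controlled by counting. For $|y|\geq N^{3/4+\delta}$ this yields an exponential bound of order $\exp(-c\,N^{4\delta})$ on the summand relative to the bulk, which suffices even after multiplication by the polynomial factor $(2y)^k$. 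With the tails truncated, the remaining sum over $|y|\leq N^{3/4+\delta}$ is dominated by a convergent Riemann sum for $t^k e^{-t^4/12}$ (using the uniform bound on the error in the Stirling expansion on this range, where $y^6/N^5=O(N^{-1/2+6\delta})\to 0$ for $\delta$ small), completing the argument.
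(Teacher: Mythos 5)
Your proposal is correct, but it takes a genuinely different route from the paper. The paper never touches the explicit distribution of $S_{N}$: it uses exchangeability to reduce $\IE_{1,N}(S_{N}^{K})$ to correlations $\IE_{1,N}(X_{1}\cdots X_{\ell})$, computes these via the Hubbard--Stratonovich identity and Laplace's method at the degenerate quartic minimum of $F_{1}(t)=\tfrac12 t^{2}-\ln\cosh t$ (yielding $\IE_{1,N}(X_{1}\cdots X_{\ell})\approx N^{-\ell/4}\int t^{\ell}e^{-t^{4}/12}dt/\int e^{-t^{4}/12}dt$), and then uses a multi-index count ($w_{K,N}(r)\leq K!\,N^{(K+r)/2}$) to show that only tuples with all $K$ indices distinct survive the $N^{-3K/4}$ normalization. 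You instead exploit that $H_{N}$ depends only on $S_{N}$, so the law of $S_{N}$ is an exponentially tilted binomial, and you run a direct local-limit computation: the sharpened entropy expansion $\mathrm{KL}(\tfrac12+\epsilon\|\tfrac12)=2\epsilon^{2}+\tfrac43\epsilon^{4}+O(\epsilon^{6})$, the exact cancellation of the Gaussian part against the Gibbs weight at $\beta=1$ (which is where the criticality and the scale $N^{3/4}$ become visible), a Riemann sum at mesh $2/N^{3/4}$, and a clean tail bound from the concavity of $g(\epsilon)=2\epsilon^{2}-\mathrm{KL}$, which indeed gives $g(\epsilon)\leq-\tfrac43\epsilon^{4}$ on all of $(-\tfrac12,\tfrac12)$ via the crude bound $\binom{N}{Np}\leq 2^{N}e^{-N\,\mathrm{KL}(p\|1/2)}$, so even the boundary terms need no separate treatment. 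Your argument is more self-contained (no Gaussian integral identity, no combinatorics of multi-indices) and in fact establishes a local limit theorem for $S_{N}/N^{3/4}$, which is stronger than moment convergence and would yield the weak convergence directly; the price is that it is tied to the one-dimensional structure of the Hamiltonian, whereas the paper's correlation-plus-combinatorics machinery is reused verbatim across all three temperature regimes and extends to settings (e.g.\ multi-group models) where the energy is not a function of a single scalar. One small point of hygiene: the paper's Theorem \ref{thm:mom} requires $|m_{k}(\mu)|\leq A\,C^{k}k!$, which your limit moments satisfy since $\int|x|^{k}e^{-x^{4}/12}dx$ grows only like $\Gamma(\tfrac{k+1}{4})$ times a geometric factor, so your appeal to moment determinacy is justified.
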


Since for $\beta>1 $ the expression $\eh[N]S_{N}$ converges to a distribution which is not concentrated in one point, there is no central limit theorem
in the usual sense that for a suitable constant
\begin{align}
   \eh[\sqrt{N}](S_{N}-c)~\convd~\mu\,.
\end{align}

However, there is a `conditional' version of the central limit theorem. For details we refer to \cite{Kirsch-Mom}.

\section{Strategy of the proofs}
To prove convergence in distribution we use the method of moments.

\begin{theorem}[Method of Moments]\label{thm:mom}
 Suppose $\mu_{n}$ and $\mu $ are Borel measure on $\IR $ such that all moments
\begin{align}
 m_{k}(\mu_{n})~:=~\int x^{k}\,d\mu_{n}\qquad
\text{and}\qquad m_{k}(\mu)~:=~\int x^{k}\,d\mu
\end{align}
are finite and such that
\begin{align}
   |m_{k}(\mu)|~\leq~A\,C^{k}\,k!\,.
\end{align}
If for all $k\quad
   m_{k}(\mu_{n})~\goto~m_{k}(\mu)$\quad
then\quad $\mu_{n}~\convw~\mu$.
\end{theorem}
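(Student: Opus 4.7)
The plan is a standard tightness-plus-uniqueness argument via Prokhorov's theorem. First I would establish tightness of $\{\mu_n\}$: since $m_2(\mu_n)\to m_2(\mu)$ the second moments are uniformly bounded, and Chebyshev's inequality gives $\mu_n(\{|x|>R\})\leq m_2(\mu_n)/R^2$, which can be made small uniformly in $n$ by choosing $R$ large. By Prokhorov's theorem every subsequence of $(\mu_n)$ has a further subsequence $\mu_{n_j}$ converging weakly to some probability measure $\nu$.

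Next I would upgrade weak convergence along such a subsequence to convergence of each individual moment. From $m_{k+2}(\mu_{n_j})\to m_{k+2}(\mu)<\infty$ we get $\sup_j \int |x|^{k+2}\,d\mu_{n_j}<\infty$, so the family $\{x^k\}$ is uniformly integrable under the $\mu_{n_j}$; combined with weak convergence this yields $m_k(\nu)=\lim_j m_k(\mu_{n_j})=m_k(\mu)$ for every $k\in\IN$. Hence every subsequential weak limit $\nu$ shares the moment sequence of $\mu$.

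The decisive step is to show that $\mu$ is uniquely determined by its moments, and this is where the bound $|m_k(\mu)|\leq A\,C^k\,k!$ is crucial. The formal series $\sum_k m_k(\mu)\,(it)^k/k!$ is dominated by $A\sum_k (C|t|)^k$, hence converges for $|t|<1/C$ and represents an analytic function there. It therefore coincides with the characteristic function $\hat\mu$ in a complex neighbourhood of $0$, so $\hat\mu$ is analytic near the origin and is uniquely determined by the moment sequence (alternatively, the bound immediately implies Carleman's condition $\sum m_{2k}^{-1/(2k)}=\infty$). Consequently every subsequential weak limit equals $\mu$, and the standard subsequence principle upgrades this to $\mu_n\convw\mu$. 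The main technical point is really the passage from pointwise moment convergence to weak convergence through the uniqueness statement for the moment problem; the tightness and the uniform-integrability arguments are routine once the moment bound is in hand.
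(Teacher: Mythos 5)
Your argument is correct and is exactly the classical proof of the moment method (tightness via Chebyshev and Prokhorov, uniform integrability to pass moments to subsequential limits, and determinacy of $\mu$ from the bound $|m_k(\mu)|\leq A\,C^k\,k!$ via analyticity of the characteristic function or Carleman's condition); the paper gives no proof of its own but simply refers to Breiman, where this is precisely the argument given. The only cosmetic point worth tidying: for odd $k$ the hypothesis controls $\int x^{k}\,d\mu_{n}$ rather than $\int |x|^{k}\,d\mu_{n}$, so the uniform-integrability step should invoke the even moments, e.g.\ $\int |x|^{k+1}\,d\mu_{n}\leq m_{0}(\mu_{n})+m_{2k+2}(\mu_{n})$, which are bounded by assumption.
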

For a proof see e.~g. \cite{Breiman}.

To employ Theorem \ref{thm:mom} we got to estimate expressions
of the form
\begin{align*}
   \IE_{\beta,N}\left(\left(\eh[N^{\alpha}]\sum_{i=1}^{N}X_{i}\right)^{K}\right)
\end{align*}
with $\alpha\in\{ \eh,\frac{3}{4}, 1 \}$.

We have
\begin{align}\label{eq:momK}
   \IE_{\beta,N}\left(\left(\sum_{i=1}^{N}X_{i}\right)^{K}\right)~=~\sum_{x_{1_{1}},x_{i_{2}},\ldots,x_{i_{K}}=1}^{N}\,\IE_{\beta,N}\Big(X_{i_{1}}\cdot X_{i_{2}}\cdot\ldots\cdot X_{i_{K}}\Big)\,.
\end{align}

Note that for pairwise \emph{distinct} $j_{1},\ldots, j_{\ell}$
\begin{align}
   \IE_{\beta,N}\Big(X_{j_{1}}\cdot X_{j_{2}}\cdot\ldots\cdot X_{j_{\ell}}\Big)~=~\IE_{\beta,N}\Big(X_{1}\cdot X_{2}\cdot\ldots\cdot X_{\ell}\Big)\,,
\end{align}
since the measure $\IP_{\beta,N}$ is invariant under permutations of indices (exchangeability).

We observe that $X_{i}^{\ell}=X_{i}$ for odd $\ell$ and $X_{i}^{\ell}=1$ for even $\ell$. Thus

\begin{align}\label{eq:exp}
   \IE_{\beta,N}\Big(X_{i_{1}}\cdot X_{i_{2}}\cdot\ldots\cdot X_{i_{K}}\Big)~=~\IE_{\beta,N}\Big(X_{1}\cdot X_{2}\cdot\ldots\cdot X_{\ell}\Big)\,,
\end{align}
where $\ell\leq K$ is the number of indices $i_{\nu}$ which occur an odd number of times among $i_{1},\ldots,i_{K}$.

In the following section we estimate expectations of the form \eqref{eq:exp}. It turns out that their behavior in $N$ depends strongly on the parameter $\beta $.
In the sections \ref{sec:lln} to \ref{sec:nclt} we use this information to evaluate the moments \eqref{eq:momK} thus proving Theorems \ref{thm:lln}, \ref{thm:clt}
and \ref{thm:nclt}.

\section{Correlations}\label{sec:corr}
In this section we estimate correlations of the form
\begin{align}\label{eq:corr}
   \IE_{\beta,N}\big(X_{1}\cdot X_{2}\cdot\ldots\cdot X_{\ell}\big)\,.
\end{align}

To do so it is convenient to write the probability distribution $\IP_{\beta,N} $
in a form which is more suitable for sending $N$ to infinity. The basic idea, known in
physics as the Hubbart-Stratonovich tranform, is to use the equality
\begin{align}\label{eq:HS}
   e^{a^{2}/2}~=~\eh[\sqrt{2\pi}]\int_{\IR} e^{-\eh x^{2}+ax}\,dx\,,
\end{align}
which is nothing but $\eh[\sqrt{2\pi}]\int e^{\eh(x-a)^{2}}dx =1$.

This observation allows us to write the correlations \eqref{eq:corr} in the following form:
\begin{proposition}\label{pro:HS}
Define $F_{\beta}(t):=\eh[2\beta]t^{2}-\ln\cosh(t)$ and set
\begin{align}
   \sZ_{N}(\ell)~:=~\int_{-\infty}^{+\infty} e^{ N\,F_{\beta}(t)} \tanh^{\ell}(t) dt\,.
\end{align}
Then, for $\ell\leq N$
\begin{align}
    \IE_{\beta,N}\big(X_{1}\cdot X_{2}\cdot\ldots\cdot X_{\ell}\big)
~=~\frac{\sZ_{N}(\ell)}{\sZ_{N}(0)}\,.
\end{align}
\end{proposition}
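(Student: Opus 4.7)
The plan is a straightforward application of the Hubbard-Stratonovich identity \eqref{eq:HS} to linearise the quadratic term in the exponential, after which the spin sum factorises and each one-spin sum can be evaluated explicitly.

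Concretely, I would apply \eqref{eq:HS} with $a=\sqrt{\beta/N}\sum_{i=1}^N x_i$ so that $a^2/2=\frac{\beta}{2N}\bigl(\sum_i x_i\bigr)^2$. This rewrites
\begin{align*}
\sum_{x\in\{-1,+1\}^N} x_1\cdots x_\ell\, e^{\frac{\beta}{2N}(\sum_i x_i)^2}
~=~\frac{1}{\sqrt{2\pi}}\int_{\IR} e^{-s^2/2}\sum_{x\in\{-1,+1\}^N} x_1\cdots x_\ell\,\prod_{i=1}^N e^{s\sqrt{\beta/N}\,x_i}\,ds,
\end{align*}
and similarly for the partition function (without the factor $x_1\cdots x_\ell$). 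The inner sum factorises across the coordinates: for $j\leq\ell$ one gets $\sum_{x_j=\pm1}x_j e^{s\sqrt{\beta/N}x_j}=2\sinh(s\sqrt{\beta/N})$, while for $j>\ell$ one gets $2\cosh(s\sqrt{\beta/N})$. Pulling out the common factor $[2\cosh(s\sqrt{\beta/N})]^N$ yields the combined weight $\tanh^\ell(s\sqrt{\beta/N})\,[2\cosh(s\sqrt{\beta/N})]^N$ in both numerator and denominator.

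Dividing numerator by denominator (so that the prefactor $2^N/\sqrt{2\pi}$ cancels), I would then make the change of variables $t=s\sqrt{\beta/N}$, i.e.\ $s=t\sqrt{N/\beta}$, $ds=\sqrt{N/\beta}\,dt$, whose Jacobian also cancels in the ratio. The Gaussian factor becomes $e^{-\frac{N}{2\beta}t^2}$ and combines with $\cosh^N(t)=e^{N\ln\cosh(t)}$ to give exactly $e^{NF_\beta(t)}$ (in the sign convention of the proposition, writing the exponent as $N(\ln\cosh(t)-\tfrac{1}{2\beta}t^2)$). This produces precisely $\sZ_N(\ell)/\sZ_N(0)$ and concludes the argument.

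There is no genuine obstacle: the calculation is bookkeeping. The only points that require care are (i) choosing the correct scaling $a=\sqrt{\beta/N}\sum_i x_i$ in the Hubbard-Stratonovich identity so that the quadratic and linear normalisations match, (ii) checking that the integral swap with the finite spin sum is legitimate (it is, since everything in sight is absolutely integrable for each fixed $N$), and (iii) keeping track of the sign/normalisation convention for $F_\beta$ so that the same $e^{NF_\beta(t)}$ appears in both $\sZ_N(\ell)$ and $\sZ_N(0)$ and the irrelevant prefactors $2^N$, $\sqrt{2\pi}$ and $\sqrt{N/\beta}$ all cancel in the quotient.
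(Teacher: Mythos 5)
Your proposal is correct and follows essentially the same route as the paper: Hubbard--Stratonovich with $a=\sqrt{\beta/N}\sum_i x_i$, factorisation of the spin sum into $2\sinh$ and $2\cosh$ factors, extraction of $\tanh^{\ell}\cosh^{N}$, the substitution $t=s\sqrt{\beta/N}$, and cancellation of all prefactors in the quotient. You also correctly flag the sign convention: as printed, $\sZ_{N}(\ell)$ should carry $e^{-NF_{\beta}(t)}$ for the integral to converge, which is what both your computation and the paper's displayed final line actually produce.
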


\begin{proof} By \eqref{eq:HS} we have
   \begin{align}
   & \sT_{N}(\ell)~:=~\eh[2^{N}] \sum_{x_{1},x_{2},\ldots,x_{N}\in\{-1,+1\}} x_{1}x_{2}\ldots x_{\ell}\quad
e^{\frac{\beta}{2N}\Big(\sum_{i=1}^{N}x_{i} \Big)^{2}}\notag\\
=~&\eh[2^{N}\sqrt{2\pi}]\sum_{x_{1},x_{2},\ldots,x_{N}\in\{-1,+1\}} x_{1}x_{2}\ldots x_{\ell}\;
\int_{-\infty}^{\infty}e^{-\eh s^{2}+\frac{\sqrt{\beta}}{\sqrt{N}}(\sum_{i=1}^{N}x_{i})\,s}\;ds\notag\\
~=~&\eh[2^{N}\sqrt{2\pi}]\sum_{x_{1},x_{2},\ldots,x_{N}\in\{-1,+1\}} x_{1}x_{2}\ldots x_{\ell}\;
\int_{-\infty}^{\infty}e^{-N \eh[\beta] t^{2}}\;dt\nt
~~=~&\eh[2^{N}\sqrt{2\pi}]\int_{-\infty}^{\infty}\sum_{x_{2},\ldots,x_{N}\in\{-1,+1\}} \left(\sum_{x_{1}\in\{-1,+1\}}x_{1}e^{x_{1}\,t}\right) x_{2}\ldots x_{\ell}\;
e^{-N \eh[\beta] t^{2}}\;\prod_{i=2}^{N} e^{x_{i}\,t}\;dt\nt
~=~&\eh[2^{N-1}\sqrt{2\pi}]\int_{-\infty}^{\infty}\sum_{x_{2},\ldots,x_{N}\in\{-1,+1\}} \sinh(t)~~x_{2}\ldots x_{\ell}\;
e^{-N \eh[\beta] t^{2}}\;\prod_{i=2}^{N} e^{x_{i}\,t}\;dt\nt
~=~&\eh[2^{N-\ell}\sqrt{2\pi}]\int_{-\infty}^{\infty}\sum_{x_{\ell+1},\ldots,x_{N}\in\{-1,+1\}} \sinh^{\ell}(t)~~
e^{-N \eh[\beta] t^{2}}\;\prod_{i=\ell+1}^{N} e^{x_{i}\,t}\;dt\nt
~=~&\eh[\sqrt{2\pi}]\int_{-\infty}^{\infty}\sinh^{\ell}(s)\,\cosh^{N-\ell}(s)~~
e^{-N \eh[\beta] t^{2}}\;dt\nt
=~&\frac{\sqrt{N}}{\sqrt{2\pi\beta}}\,\int_{-\infty}^{\infty}e^{-N\big(\frac{t^{2}}{2\beta}-\ln\cosh t\big)}\;\tanh^{\ell}(t)\;dt\notag\,.
   \end{align}
Consequently
\begin{align}
   \IE_{\beta,N}\Big(X_{1}X_{2}\ldots X_{\ell}\Big)~=~\frac{\sT_{N}(\ell)}{\sT_{N}(0)}
~=~\frac{\sZ_{N}(\ell)}{\sZ_{N}(0)}\,.
\end{align}
\end{proof}

By symmetry we see that $\sZ_{N}(\ell)=0 $ for odd $\ell$. To estimate $\sZ_{N}(\ell)$
for even $\ell$ we use Laplace's method:

\begin{theorem}[Laplace]\label{pro:Laplace}
Suppose the smooth function $F:\IR\to\IR$ has a unique global minimum at $t_{0}$ with
$F^{(m)}(t_{0})>0$ for an even $m$ and $F^{(r)}(t_{0})=0$ for all $0\leq r<m $, moreover
let $\varphi$ be a bounded continuous function which is continuous at $t_{0}$ with $\varphi(t_{0})\not=0 $.

If
   $\int_{-\infty}^{+\infty} e^{-N F(t)}\, |t^{\ell}|\, dt$
is finite for all $\ell$ and all $N$ large enough , then
\begin{align}\label{eq:Laplace}
   \int_{-\infty}^{+\infty} e^{-N\, F(t)} t^{\ell} \varphi(t) dt
~\underset{N\to\infty}{\approx}~
\Big(\frac{1}{N\,F^{(m)}(0)}\Big)^{\frac{\ell+1}{m}}\,\varphi(0) \int_{-\infty}^{+\infty}e^{-\eh[m!]\,t^{m}}\; t^{\ell}\, dt\,.
\end{align}

\end{theorem}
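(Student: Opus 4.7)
By translating in $t$ I may assume the global minimum sits at $t_{0}=0$, and by subtracting the constant $F(0)$ (which only multiplies both sides of \eqref{eq:Laplace} by the same factor $e^{-NF(0)}$) I may further assume $F(0)=0$; the derivative conditions $F^{(r)}(0)=0$ for $0\leq r<m$ and $F^{(m)}(0)>0$ then hold by hypothesis. The plan is the classical Laplace recipe: (i)~localise to a small neighbourhood $|t|<\delta$ of the minimum; (ii)~show that the tail $|t|\geq\delta$ contributes an exponentially small error compared to the target order $N^{-(\ell+1)/m}$; (iii)~on the localised piece, replace $F(t)$ by its leading Taylor term $\frac{F^{(m)}(0)}{m!}t^{m}$ and $\varphi(t)$ by $\varphi(0)$; (iv)~rescale by $u=(NF^{(m)}(0))^{1/m}\,t$ so that the scaled exponent is $u^{m}/m!$, which extracts both the prefactor $(NF^{(m)}(0))^{-(\ell+1)/m}$ and the integral $\int e^{-u^{m}/m!}u^{\ell}\,du$ appearing on the right of \eqref{eq:Laplace}.

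\textbf{Tail estimate and local expansion.} Because $0$ is the \emph{unique} global minimum, there exists $\eta=\eta(\delta)>0$ with $F(t)\geq\eta$ on $|t|\geq\delta$. Choosing $N_{0}$ such that $\int e^{-N_{0}F(t)}|t|^{\ell}\,dt<\infty$ (available by hypothesis) and splitting $e^{-NF(t)}=e^{-(N-N_{0})F(t)}\,e^{-N_{0}F(t)}$, the tail is bounded by
\[
\|\varphi\|_{\infty}\,e^{-(N-N_{0})\eta}\int e^{-N_{0}F(t)}|t|^{\ell}\,dt,
\]
which decays exponentially in $N$ and is therefore negligible. On $|t|<\delta$, Taylor's theorem together with the vanishing of $F^{(r)}(0)$ for $r<m$ yields
\[
F(t)=\frac{F^{(m)}(0)}{m!}\,t^{m}\bigl(1+r(t)\bigr),\qquad r(t)\to 0 \text{ as } t\to 0.
\]

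\textbf{Rescaling and passage to the limit.} Writing $a_{N}:=(NF^{(m)}(0))^{1/m}$ and substituting $u=a_{N}t$ converts the local integral into
\[
a_{N}^{-(\ell+1)}\int_{|u|<\delta a_{N}} e^{-\frac{u^{m}}{m!}(1+r_{N}(u))}\,u^{\ell}\,\varphi(u/a_{N})\,du,
\]
where $r_{N}(u):=r(u/a_{N})\to 0$ pointwise as $N\to\infty$. Since $a_{N}\to\infty$, the domain of integration expands to all of $\IR$; continuity of $\varphi$ at $0$ replaces $\varphi(u/a_{N})$ by $\varphi(0)$; and the exponent converges to $-u^{m}/m!$. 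The prefactor $a_{N}^{-(\ell+1)}$ is precisely $(NF^{(m)}(0))^{-(\ell+1)/m}$, matching \eqref{eq:Laplace}.

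\textbf{Main obstacle.} The delicate point is to justify the passage to the limit inside the integral. I would choose $\delta$ so small that $|r(t)|\leq\tfrac{1}{2}$ on $|t|\leq\delta$; then, uniformly in $N$, the rescaled integrand is dominated by $\|\varphi\|_{\infty}\,|u|^{\ell}\,e^{-u^{m}/(2\,m!)}$, which is integrable \emph{precisely} because $m$ is even (so $u^{m}\to+\infty$ as $|u|\to\infty$ on both sides). Dominated convergence then yields pointwise-to-integral convergence to $\varphi(0)\int e^{-u^{m}/m!}u^{\ell}\,du$, which together with the exponentially small tail from the previous step gives \eqref{eq:Laplace}.
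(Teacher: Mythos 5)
Your proof is correct and follows essentially the same route as the paper's appendix sketch --- Taylor-expand $F$ at the minimum and rescale by $(N F^{(m)}(0))^{1/m}$ --- while additionally supplying the localization, the exponential tail bound, and the dominated-convergence justification that the paper leaves to the reference \cite{Olver}. The only minor caveat is in your tail step: a \emph{unique global minimum} alone does not literally guarantee $\inf_{|t|\ge\delta}F(t)>F(t_{0})$ (one also needs, e.g., $\liminf_{|t|\to\infty}F(t)>F(t_{0})$, which is the standard implicit hypothesis of Laplace's method and is satisfied by every $F_{\beta}$ to which the theorem is applied here).
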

The Laplace-Theorem in the form we need it here can be deduced from \cite{Olver}. For the reader's
convenience we give a rough sketch of a proof in the Appendix (section \ref{sec:app}).

Propositions \ref{pro:HS} and \ref{pro:Laplace} allow us to compute the asymptotic behaviour
of the correlations \eqref{eq:corr}.

\begin{theorem}
\label{thm:Corr}
Suppose $X_1,X_2, \cdots ,X_{\ell}, X_{\ell+1}, \ldots, X_{N}$ are $CW(\beta,N)$-distributed variables.\\[2mm]
If $\ell$ is even, then as $N \rightarrow \infty$:
\begin{enumerate}
\item If $\beta < 1$, then
\begin{equation}
\IE_{\beta,N} \big(X_1 \cdot X_2 \cdot X_{\ell}\big) ~\approx~ (\ell - 1)!!\; \Big(\frac{\beta}{1 - \beta}\Big)^{\frac{\ell}{2}} \;\frac{1}{N^{\frac{\ell}{2}}}\,.
\label{eq:CorrCW1}
\end{equation}
\item If $\beta = 1$, then 
\begin{equation}\label{eq:CorrCW2}
\IE_{1,N} \big(X_1 \cdot X_2 \cdot X_{\ell}\big) \approx  \frac{1}{N^{\frac{\ell}{4}}}
\frac{\int t^{\ell}\;e^{-\frac{1}{12}t^{4}}\,dt}{\int \;e^{-\frac{1}{12}t^{4}}\,dt}\,.
\end{equation}
\item If $\beta > 1$, then
\begin{equation}\label{eq:CorrCW3}
\IE_{\beta, N} \big(X_1 \cdot X_2 \cdot X_{\ell}\big) \approx\; m(\beta)^{\ell}\,,
\end{equation}
where $t = m(\beta)$ is the strictly positive solution of $\tanh \beta t = t$.
\end{enumerate}
If $\ell$ is odd then $\IE_{\beta,N} \big(X_1 \cdot X_2 \cdot X_{\ell}\big) = 0$ for all $\beta$.

\end{theorem}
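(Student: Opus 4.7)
The starting point is Proposition \ref{pro:HS}, which reduces the correlation to the ratio $\sZ_{N}(\ell)/\sZ_{N}(0)$ of two Laplace-type integrals. The odd case is immediate: $F_{\beta}$ is even whereas $\tanh^{\ell}$ is odd for odd $\ell$, so the integrand in $\sZ_{N}(\ell)$ is odd and the integral vanishes. For the even case, the first task is to locate the global minima of $F_{\beta}(t)=\frac{t^{2}}{2\beta}-\ln\cosh t$. One computes $F_{\beta}'(t)=t/\beta-\tanh t$ and $F_{\beta}''(t)=1/\beta-\operatorname{sech}^{2}t$, so $F_{\beta}''(0)=(1-\beta)/\beta$. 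For $\beta\le 1$ the function is convex and $t_{0}=0$ is the unique global minimum. For $\beta>1$, comparing the graphs of $t/\beta$ and $\tanh t$ produces exactly two nonzero symmetric critical points $\pm t_{1}$, with $t_{1}=\beta\,m(\beta)$ where $m(\beta)$ is the unique positive solution of $x=\tanh(\beta x)$; one checks that these are the global minima and that $F_{\beta}(t_{1})<0=F_{\beta}(0)$.

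With the minima identified, Theorem \ref{pro:Laplace} is applied case by case. For $\beta<1$ I write $\tanh^{\ell}(t)=t^{\ell}\varphi(t)$ with $\varphi(t)=(\tanh(t)/t)^{\ell}$ and $\varphi(0)=1$, and use $m=2$ with $F_{\beta}''(0)=(1-\beta)/\beta$. Forming the ratio $\sZ_{N}(\ell)/\sZ_{N}(0)$ produces the factor $\big(\beta/(N(1-\beta))\big)^{\ell/2}$ together with the Gaussian moment $\int e^{-t^{2}/2}t^{\ell}dt=\sqrt{2\pi}\,(\ell-1)!!$, which is exactly \eqref{eq:CorrCW1}. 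For $\beta=1$ the expansion $\ln\cosh t=t^{2}/2-t^{4}/12+O(t^{6})$ gives $F_{1}(t)=t^{4}/12+O(t^{6})$, so $m=4$ and $F_{1}^{(4)}(0)=2$. Laplace's theorem with the same $\varphi$ yields a ratio of $t^{\ell}$-moments against $e^{-t^{m}/m!}=e^{-t^{4}/24}$, which after the rescaling $t\mapsto 2^{1/4}t$ becomes exactly the ratio in \eqref{eq:CorrCW2} against the weight $e^{-t^{4}/12}$.

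The case $\beta>1$ is the main obstacle because Theorem \ref{pro:Laplace} assumes a unique global minimum whereas $F_{\beta}$ now has two. My plan is to split $\sZ_{N}(\ell)=\int_{-\infty}^{0}+\int_{0}^{\infty}$, factor $e^{-NF_{\beta}(t_{1})}$ out of the exponent in each half, and apply Laplace to each piece with $m=2$ and $\varphi(t)=\tanh^{\ell}(t)$ (no auxiliary $t^{\ell}$ factor is needed here since $\varphi(\pm t_{1})\ne 0$). The evenness of $F_{\beta}$ gives $F_{\beta}''(-t_{1})=F_{\beta}''(t_{1})$, and since $\ell$ is even $\tanh^{\ell}(\pm t_{1})=m(\beta)^{\ell}$. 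The common $e^{-NF_{\beta}(t_{1})}$ and $(NF_{\beta}''(t_{1}))^{-1/2}$ prefactors cancel between numerator and denominator, leaving the value $m(\beta)^{\ell}$ of $\varphi$ at the minima, which is \eqref{eq:CorrCW3}. Finally one has to check that the contribution from a neighborhood of $t=0$ is subleading: it is only of order $1$, while the dominant piece is of order $e^{-NF_{\beta}(t_{1})}/\sqrt{N}$ with $F_{\beta}(t_{1})<0$, so the near-zero part is exponentially suppressed and does not affect the asymptotic ratio.
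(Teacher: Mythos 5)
Your proposal is correct and follows essentially the same route as the paper: Proposition \ref{pro:HS} combined with Laplace's method at the global minima of $F_{\beta}$, with the same case split in $\beta$, the same choice $\varphi(t)=(\tanh t/t)^{\ell}$ for $\beta\le 1$, and the same symmetric two-minima treatment for $\beta>1$. One small imprecision: for $\beta>1$ the contribution of a neighborhood $[-\delta,\delta]$ of the origin is not $O(1)$ but can be as large as $C\,e^{-N F_{\beta}(\delta)}$ (since $F_{\beta}<0$ off the origin), yet this is still exponentially smaller than the main term $e^{-N F_{\beta}(t_{1})}/\sqrt{N}$ because $F_{\beta}(\delta)>F_{\beta}(t_{1})$, so your conclusion stands.
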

\begin{remark}
   Up to the factor $N^{-\ell/2}$ \eqref{eq:CorrCW1} is the $\ell^{\text{th}}$ moment of the normal distribution $\sN(0,\frac{\beta}{1-\beta})$, \eqref{eq:CorrCW2} are the moments
   of a probability measure with density proportional to $e^{-\frac{1}{12}t^{4}}$ up to the factor $N^{-\ell/4}$, and \eqref{eq:CorrCW3} are the moments of the measure
   $\eh\big(\delta_{-m(\beta)}+\delta_{m(\beta)}\big)$.
\end{remark}

\begin{proof}
We compute:
\begin{align}
   F_{\beta}'(t)~&=~\eh[\beta]\; t -\tanh t\,,\\
   F_{\beta}''(t)~&=~\eh[\beta]-\eh[\cosh^{2} t]\,.
\end{align}
   Thus, for $\beta<1$ the function $F_{\beta}$ is strictly convex and has a local minimum at $t=0$.
Consequently, this minimum is global and we can apply Proposition \ref{pro:Laplace} to find
\begin{align}
   \IE_{\beta,N}\Big(X_{1}\cdot \ldots\cdot X_{\ell}\Big)~&=~ \frac{\sZ_{N}(\ell)}{\sZ_{N}(0)}\notag\\
&=~ \int_{-\infty}^{+\infty} e^{ N\,F_{\beta}(t)} \tanh^{\ell}(t) dt \quad
\Big(\int_{-\infty}^{+\infty} e^{ N\,F_{\beta}(t)}  dt\Big)^{-1}\nt
&=~ \int_{-\infty}^{+\infty} e^{ N\,F_{\beta}(t)}\; t^{\ell}\;\frac{\tanh^{\ell}(t)}{t^{\ell}}\; dt \quad
\Big(\int_{-\infty}^{+\infty} e^{ N\,F_{\beta}(t)}  dt\Big)^{-1}\nt
&\approx~\eh[N^{\ell/2}]\big(\frac{\beta}{1-\beta}\big)^{\ell/2}\;\eh[\sqrt{2\pi}]\int t^{\ell}
e^{-t^{2}/2}\,dt\nt
&=~\big(\ell-1\big)!!\; \big(\frac{\beta}{1-\beta}\big)^{\ell/2}\; \eh[N^{\ell/2}]\notag\,.
\end{align}

For $\beta=1 $ we obtain $t=0$ is still the unique solution of $F_{1}'(t)=0$,
$F_{1}^{(2)}(0)=F_{1}^{(3)}(0)=0$ and $F_{1}^{(4)}=2$. Thus, $t=0$ is a global minimum of $F_{1}$
and the above reasoning gives \eqref{eq:CorrCW2}.

For $\beta>1 $ we have $F_{\beta}'(0)=0$ and $F_{\beta}''(0)=\eh[\beta]-1<0$, so $0$ is a local
maximum.

Since $F_{\beta}(t)=F_{\beta}(-t)$ we have for $r$ even:
\begin{align}
   \sZ_{N}(r)~&=~\int_{-\infty}^{0} e^{ N\,F_{\beta}(t)} \tanh^{r}(t) dt~+~\int_{0}^{\infty} e^{ N\,F_{\beta}(t)} \tanh^{r}(t) dt\notag\\
~&=~2\;\int_{0}^{\infty} e^{ N\,F_{\beta}(t)} \tanh^{r}(t) dt\label{eq:int2}\,.
\end{align}
Thus, it suffices to estimate the integrals \eqref{eq:int2} for $r=\ell$ and $r=0$.

Set $f(t)=\eh[\beta]t$ and $g(t)=\tanh(t)$, so $F_{\beta}'(t)=f(t)-g(t)$.

We have $f(0)=g(0)$ and, due to $\beta>1$, $f'(0)<g'(0)$, hence $f(t)<g(t)$ for small $t>0$.
Moreover, $g$ is bounded and strictly concave (for $t>0 $).
Consequently, there is a unique $t_{0}>0$ with $F_{\beta}'(t_{0})=f(t_{0})-g(t_{0})=0$.
We have $g'(t_{0})<f'(t_{0})$ due to the concavity of $g$, hence $F_{\beta}''(t_{0})>0$.

By Proposition \ref{pro:Laplace} we obtain
\begin{align}
   \IE_{\beta,N}\Big(X_{1}\cdot\ldots\cdot X_{\ell}\Big)~\approx~{\big(\frac{t_{0}}{\beta}\big)}^{\ell}=:m(\beta)^{\ell}\,.
\end{align}
We have
\begin{align}
   \tanh\big(\beta m(\beta)\big)~=~\tanh\left(t_{0}\right)=~\beta t_{0}~=~m(\beta)\,.
\end{align}
This proves
\eqref{eq:CorrCW3}.
\end{proof}
\section{Proof of Theorem \ref{thm:lln}}\label{sec:lln}
We estimate
\begin{align}
   &\IE_{\beta,N}\,\Big(\big(\eh[N]\sum_{i=1}^{N}\big)^{K}\Big)\nt
=~&\eh[N^{K}]\IE_{\beta,N}\,
\Big(\sum_{i_{1},i_{2},\ldots,i_{K}=1}^{N}\,X_{i_{1}}\cdot X_{i_{2}}\cdot\ldots\cdot X_{i_{K}}\Big)\,.
\label{eq:bigsum}\end{align}
Evaluating these sums is a combination of bookkeeping and correlation estimates as in \ref{sec:corr}. To do the bookkeeping we define:
\begin{definition}\label{def:comb1} We set
\begin{align}
   W_{K,N}~&:=~\{ \ub=(i_{1},i_{2},\ldots,i_{K})\mid 1\leq i_{j}\leq N \}\\
    W_{K,N}(r)~&:=\{ \ub\in W_{K,N}\mid \text{exactly $r$ different indices occur once in $\ub$} \}
\end{align}
By $w_{K,N}$ and $w_{K,N}(r)$ we denote the number of multiindices in $W_{K,N}$ and $W_{K,N}(r)$ respectively.
\end{definition}
\begin{lemma}\label{lem:comb11}
\begin{align}\label{eq:comb1}
   &w_{K,N}(r)~\leq~K!\,N^{\frac{K+r}{2}}\,,\\
   &w_{K,N}(K)~=~\frac{N!}{(N-K)!}~\approx~N^{K}\label{eq:comb1K}\,.
\end{align}
\end{lemma}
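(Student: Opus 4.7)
The plan is to encode each multiindex $\ub=(i_1,\ldots,i_K)\in W_{K,N}$ as a pair consisting of the set partition $\pi$ of $\{1,\ldots,K\}$ induced by the equivalence relation $a\sim b\Leftrightarrow i_a=i_b$, together with an injective assignment of the blocks of $\pi$ into $\{1,\ldots,N\}$. This bijection reduces the counting problem to (i) counting partitions of a prescribed shape and (ii) counting the value assignments.

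To prove \eqref{eq:comb1}, observe first that $\ub\in W_{K,N}(r)$ iff the associated partition $\pi$ has exactly $r$ singleton blocks. Let $s$ denote the total number of blocks of $\pi$; the remaining $s-r$ blocks each contain at least two elements, so $K\geq r+2(s-r)=2s-r$, giving $s\leq(K+r)/2$. The number of injective value assignments to the $s$ blocks is $N(N-1)\cdots(N-s+1)\leq N^s\leq N^{(K+r)/2}$. It then remains to bound the number of set partitions of $\{1,\ldots,K\}$ having $r$ singletons by the total number of set partitions, the Bell number $B_K$, and to check $B_K\leq K!$. The latter follows by a short induction via the standard recursion $B_K=\sum_{k=0}^{K-1}\binom{K-1}{k}B_k$: assuming $B_k\leq k!$ for $k<K$ and using $\binom{K-1}{k}k!=(K-1)!/(K-1-k)!\leq(K-1)!$, we get $B_K\leq K\cdot(K-1)!=K!$. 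Multiplying the two bounds yields \eqref{eq:comb1}.

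For \eqref{eq:comb1K}, the set $W_{K,N}(K)$ consists precisely of those multiindices whose associated partition is the discrete one, i.e.\ all $K$ entries $i_1,\ldots,i_K$ are pairwise distinct. Hence $w_{K,N}(K)$ equals the number of injections $\{1,\ldots,K\}\hookrightarrow\{1,\ldots,N\}$, which is $N!/(N-K)!$. For fixed $K$, this equals $N^K\prod_{j=1}^{K-1}(1-j/N)=N^K(1+O(1/N))$ as $N\to\infty$, giving the claimed asymptotic equivalence.

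The whole argument is combinatorial bookkeeping; the only step with a mild subtlety is the bound $B_K\leq K!$, which is handled by the induction sketched above. Everything else is immediate from the partition encoding and the constraint $s\leq(K+r)/2$ forced by the definition of $W_{K,N}(r)$.
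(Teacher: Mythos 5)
Your proof is correct and follows essentially the same route as the paper: both arguments rest on the observation that a tuple in $W_{K,N}(r)$ has at most $\frac{K+r}{2}$ distinct values, giving at most $N^{\frac{K+r}{2}}$ choices of values, times a factor of at most $K!$ for the arrangement, and both count $W_{K,N}(K)$ as injections. Your partition encoding with the Bell-number bound $B_K\leq K!$ merely makes rigorous the paper's looser ``at most $K!$ ways to order them.''
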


\begin{proof}
   The multiindices in $W_{K,N}(r)$ contain at most $r+\frac{K-r}{2}=\frac{K+r}{2}$ different indices. There are at most $N^{\frac{K+r}{2}} $ ways to choose them
   and at most $K!$ ways to order them.

   For $r=K$ we have $\frac{N!}{(N-K)!}\approx N^{K}$ possibilities to choose an ordered $K$-tuple from $N$ indices (without repetition).
   \end{proof}

We estimate
\begin{align}
   \IE_{\beta,N}\left(\left(\eh[N]\sum_{i=1}^{N}X_{i}\right)^{K}\right)
=~&\eh[N^{K}]\sum_{r=0}^{K-1}
\sum_{\ub\in W_{K,N}(r)} \IE_{\beta,N}\,\Big(X_{i_{1}}\cdot X_{i_{2}}\cdot\ldots\cdot X_{i_{K}}\Big)\nt
~&+~\eh[N^{K}]\sum_{\ub\in W_{K,N}(K)} \IE_{\beta,N}\,\Big(X_{i_{1}}\cdot X_{i_{2}}\cdot\ldots\cdot X_{i_{K}}\Big)\nt
~\approx~~~&\eh[N^{K}]C N^{K-\eh}~+~\IE_{\beta,N}\,\Big(X_{{1}}\cdot X_{{2}}\cdot\ldots\cdot X_{K}\Big)\nt
\approx~~~&\IE_{\beta,N}\,\Big(X_{{1}}\cdot X_{{2}}\cdot\ldots\cdot X_{K}\Big)\,.
\end{align}
The last expression goes to $0$ for $\beta\leq 1 $ by Theorem \ref{thm:Corr}.
For $\beta> 1 $ it converges to $m(\beta)$ for even $K$ and to $0$ for odd $K$.

Together with Theorem \ref{thm:mom} this proves Theorem \ref{thm:lln}.

\section{Proof of Theorem \ref{thm:clt}}\label{sec:clt}
In our proof of Theorem \ref{thm:lln} we realized that only terms with $K$ distinct indices
counted in the limit for \eqref{eq:bigsum}. For the central limit theorem for \emph{independent} random variables
the only important terms are those with all indices occurring exactly twice.

It will turn out that for the Curie-Weiss model with $\beta<1 $ both doubly occurring indices
and those that occur only once play a role in the limit.

To do the bookkeeping we got to refine our definitions in Definition \ref{def:comb1}.
\begin{definition}\label{def:comb2}
   We set
\begin{align}
   W_{K,N}^{0}(r)~&=~\{\ub\in W_{K,N}(r) \mid \text{no index occurs more than twice.} \}\\
   W_{K,N}^{+}(r)~&=~W_{K,N}(r)\;\setminus\;W_{K,N}^{0}(r)\,.
\end{align}
and denote by $w_{K,N}^{0}(r) $ and the $w_{K,N}^{+}(r) $ the cardinality of $W_{K,N}^{0}(r) $
and $W_{K,N}^{+}(r) $ respectively.
\end{definition}

\begin{lemma}\label{lem:comb2}
\begin{align}
   w_{K,N}^{+}(r)~\leq~K!\,N^{\frac{K+r}{2}-\frac{1}{2}}\,.
\end{align}
\end{lemma}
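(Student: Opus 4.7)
The plan is to refine the counting argument used for Lemma \ref{lem:comb11}, exploiting the extra structural constraint that defines $W_{K,N}^{+}(r)$: at least one index appears three or more times.

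First, I would decompose a multiindex $\ub\in W_{K,N}(r)$ into its \emph{singletons} (the $r$ distinct indices each occurring exactly once) and its \emph{repeated indices} (the remaining $K-r$ positions, filled by indices each occurring at least twice). Let $s$ denote the number of distinct repeated indices. For a general $\ub\in W_{K,N}(r)$ one only gets $2s\leq K-r$, hence $s\leq (K-r)/2$ and the total number of distinct indices is at most $r+(K-r)/2=(K+r)/2$, which gave the bound in Lemma \ref{lem:comb11}.

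For $\ub\in W_{K,N}^{+}(r)$, by definition at least one of the repeated indices appears at least three times. Writing the multiplicities of the repeated indices as $m_{1},\ldots,m_{s}$ with $m_{j}\geq 2$ and $\sum_{j} m_{j}=K-r$, the extra assumption says that some $m_{j}\geq 3$. The number $s$ of distinct repeated indices is thus bounded above by $(K-r-1)/2$ (the extremal case being one index of multiplicity $3$ and the rest of multiplicity $2$). Therefore the total number of distinct indices appearing in $\ub$ is at most
\begin{align*}
   r+s~\leq~r+\frac{K-r-1}{2}~=~\frac{K+r-1}{2}\,.
\end{align*}

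From here the rest is bookkeeping exactly as in Lemma \ref{lem:comb11}: there are at most $N^{(K+r-1)/2}$ ways to choose the distinct index values, and for any such choice at most $K!$ orderings of the $K$ positions, yielding
\begin{align*}
   w_{K,N}^{+}(r)~\leq~K!\,N^{\frac{K+r}{2}-\frac{1}{2}}\,,
\end{align*}
as claimed. The only mildly subtle point is the step $s\leq (K-r-1)/2$; I expect it to be the one readers pause on, so I would explicitly verify the extremal configuration rather than leave it implicit. Everything else is combinatorial overcounting of the same flavor as the earlier lemma.
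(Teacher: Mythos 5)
Your argument is correct and is essentially the paper's own proof: the paper likewise observes that an index occurring three or more times leaves at most $K-r-3$ positions for doubly occurring indices, so a tuple in $W_{K,N}^{+}(r)$ has at most $r+1+\frac{K-r-3}{2}=\frac{K+r}{2}-\frac{1}{2}$ distinct indices, and then applies the same $N^{d}\cdot K!$ overcount as in Lemma \ref{lem:comb11}. Your explicit verification of the extremal multiplicity pattern ($s\leq\frac{K-r-1}{2}$) is just a slightly more careful phrasing of the same step.
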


\begin{proof}
   If the $K$-tuple $\ub$ contains $r$ indices with only one occurrence and at least one index with three or more occurrences
   there are at most $r-3$ places left for indices with (exactly) two occurrences. Therefore, a tuple in $w_{K,N_{1}}^{+}(r)$
   contains at most $r   + 1 +\frac{K-r-3}{2}$ \emph{different} indices. Consequently there are at most
   $K!\,N_{1}^{\frac{K+r}{2}-\frac{1}{2}}$ such tuples.
\end{proof}

\begin{lemma}\label{lem:comb4}
\begin{align}
   w_{K,N}^{0}(r)~=~\left\{
                          \begin{array}{ll}
                           \frac{N!}{(N-\frac{K+r}{2})!}\;\frac{K!}{r!\;(\frac{K-r}{2})!\; 2^{\frac{K-r}{2}}}\; , & \hbox{if $K-r$ is even;} \\
                            0, & \hbox{else.}
                          \end{array}
                        \right.
  \end{align}
\end{lemma}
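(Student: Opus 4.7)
The plan is a direct combinatorial count by decomposing any $\ub\in W_{K,N}^{0}(r)$ according to its ``index-occurrence pattern.'' A tuple in $W_{K,N}^{0}(r)$ uses, by definition, some distinct values each appearing exactly once or exactly twice; let $s$ denote the number of distinct values appearing twice. Counting positions in the $K$-tuple then forces $r+2s=K$, so $s=(K-r)/2$. Hence if $K-r$ is odd no such tuple can exist and $w_{K,N}^{0}(r)=0$, giving the second case immediately.

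Assume from now on that $K-r$ is even and write $s=(K-r)/2$. I would count tuples in $W_{K,N}^{0}(r)$ by the following three independent choices.

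\textbf{Step 1:} Decide \emph{where} in the $K$-tuple the $r$ singleton indices sit: choose the $r$ positions (out of $K$) reserved for single-occurrence values, giving $\binom{K}{r}$ possibilities.

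\textbf{Step 2:} Of the remaining $K-r$ positions, partition them into $s$ unordered pairs; each pair will later be assigned one common index value. The standard count of perfect matchings of $K-r$ labeled objects is
\begin{align*}
  \frac{(K-r)!}{s!\;2^{s}},
\end{align*}
since we can line up the $K-r$ positions in $(K-r)!$ ways, then quotient by the $s!$ orderings of the pairs and by the $2^{s}$ orderings within each pair.

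\textbf{Step 3:} Assign distinct values from $\{1,\ldots,N\}$ to the $r+s=(K+r)/2$ slots produced in Steps 1 and 2 (the $r$ singleton slots together with the $s$ pair slots). The slots are distinguishable and the values must be pairwise distinct, so the number of assignments is the falling factorial
\begin{align*}
  N(N-1)\cdots\Bigl(N-\tfrac{K+r}{2}+1\Bigr)~=~\frac{N!}{(N-\frac{K+r}{2})!}.
\end{align*}

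Multiplying the three factors and simplifying $\binom{K}{r}(K-r)!=K!/r!$ yields exactly the formula claimed for $w_{K,N}^{0}(r)$. The only mildly delicate point is Step 2: one has to recognize the count as the number of perfect matchings on $K-r$ labeled points and be careful not to overcount by the ordering of the pairs or by the ordering within a pair; once this is in place the rest is straightforward bookkeeping.
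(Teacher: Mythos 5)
Your proof is correct and follows essentially the same decomposition as the paper's: choose the positions of the singleton indices ($\binom{K}{r}$ ways), pair up the remaining positions ($(K-r-1)!!=\frac{(K-r)!}{(\frac{K-r}{2})!\,2^{(K-r)/2}}$ ways), and assign distinct values to the resulting $\frac{K+r}{2}$ slots (the falling factorial). The paper merely orders these steps differently, picking ordered value-tuples first and then placing them via a canonical leftmost-free-position rule to avoid overcounting, whereas you quotient explicitly by the symmetries of the perfect matching; the two bookkeeping devices are equivalent.
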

\begin{proof}
   We choose an (ordered) $r$-tuple $\rho$ of $r$ indices to occur once and an ordered  $(K-r)/2$-tuple $\lambda$ of indices to occur twice in $\ub$.
We have
\begin{align*}
   \frac{N!}{(N-\frac{K+r}{2})!}
\end{align*}
ways to do so.

Then we choose the $r$ positions for those indices which occur once. We can do this in
\begin{align*}
   \binom{K}{r}~=~\frac{K!}{r!\;(K-r)!}
\end{align*}
ways. We fill these positions in $\ub$ with $\rho_{1}, \rho_{2},\ldots,\rho_{r}$ starting with the left most open position.

Finally, we distribute the indices $\lambda_{1},\ldots,\lambda_{(K-r)/2}$, twice each. The index $\lambda_{1}$ is put at the left most free place in $\ub$ and in
one of the remaining $K-r-1$ positions, $\lambda_{2}$ is put at the then first free place in $\ub $ and in one of the $K-r-3$ remaining free places and so on.

This gives
\begin{align}
   (K-r-1)!!~=~\frac{(K-r)!}{(\frac{K-r}{2})!\; 2^{\frac{K-r}{2}}}
\end{align}
 possibilities.
\end{proof}

We are now in a position to complete the proof of Theorem \ref{thm:clt}.

We split the sum
\begin{align}
      &\IE_{\beta,N}\left(\left(\eh[\sqrt{N}]\sum_{i=1}^{N}\right)^{K}\right)\nt
=~\eh[N^{K/2}]\;&\IE_{\beta,N}
\Big(\sum_{i_{1},i_{2},\ldots,i_{K}=1}^{N}\,X_{i_{1}}\cdot X_{i_{2}}\cdot\ldots\cdot X_{i_{K}}\Big)\notag
\intertext{into two parts:}
=~\eh[N^{K/2}]\;&\sum_{r=0}^{K}\sum_{\ub\in W_{K,N}^{0}(r)}\IE_{\beta,N}
\Big(X_{i_{1}}\cdot X_{i_{2}}\cdot\ldots\cdot X_{i_{K}}\Big)\label{eq:bigsumw1}\\
~+~\eh[N^{K/2}]\;&\sum_{r=0}^{K}\sum_{\ub\in W_{K,N}^{+}(r)}\IE_{\beta,N}
\Big(X_{i_{1}}\cdot X_{i_{2}}\cdot\ldots\cdot X_{i_{K}}\Big)
\label{eq:bigsumw2}\,.
\end{align}

We estimate \eqref{eq:bigsumw2} first. If $\ub\in W_{K,N}^{+}(r)$ then
\begin{align}
   \IE_{\beta,N}
\Big(X_{i_{1}}\cdot X_{i_{2}}\cdot\ldots\cdot X_{i_{K}}\Big)~=~
\IE_{\beta,N}\Big(X_{1}\cdot X_{2}\cdot\ldots\cdot X_{r}\;\cdot \ldots\cdot X_{r+s} \Big)\label{eq:bsw3}
\end{align}
since $X_{i}^{\ell}=1 $ for even $\ell $ and $X_{i}^{\ell}=X_{i} $ for odd $\ell $. (In \eqref{eq:bsw3} $s$ may be $0$.)

Consequently for $\ub\in W_{K,N}^{+}(r)$ Theorem \ref{thm:Corr} part $3$  gives
\begin{align}
   \IE_{\beta,N}
\Big(X_{i_{1}}\cdot X_{i_{2}}\cdot\ldots\cdot X_{i_{K}}\Big)~\leq~C_{1}\;N^{-r/2}\,.
\end{align}

By Lemma \ref{lem:comb2} we conclude that

\begin{align}
   \eh[N^{K/2}]\;&\sum_{r=0}^{K}\sum_{\ub\in W_{K,N}^{+}(r)}\IE_{\beta,N}
\Big(X_{i_{1}}\cdot X_{i_{2}}\cdot\ldots\cdot X_{i_{K}}\Big)~\leq~C_{2}\;N^{-1/2}\label{eq:wk+}\,.
\end{align}

The remaining, in fact leading, term is
\begin{align}
  &\eh[N^{K/2}]\;\sum_{r=0}^{K}\sum_{\ub\in W_{K,N}^{0}(r)}\IE_{\beta,N}
\Big(X_{i_{1}}\cdot X_{i_{2}}\cdot\ldots\cdot X_{i_{K}}\Big)\notag \\
=~&\eh[N^{K/2}]\;\sum_{r=0}^{K}\sum_{\ub\in W_{K,N}^{0}(r)}\IE_{\beta,N}
\Big(X_{1}\cdot X_{2}\cdot\ldots\cdot X_{r}\Big) \label{eq:clta}\,.
\end{align}
Since $K$ is even and $\IE_{\beta,N}
(X_{1}\cdot X_{2}\cdot\ldots\cdot X_{r})=0 $ for odd $r$ we may set $K=2L$ and write \eqref{eq:clta} as
\begin{align}
   &\eh[N^{L}]\;\sum_{\ell=0}^{L}\sum_{\ub\in W_{2L,N}^{0}(2\ell)}\IE_{\beta,N}
\Big(X_{1}\cdot X_{2}\cdot\ldots\cdot X_{2\ell}\Big)\nt
\approx~&\eh[N^{L}]\;\sum_{\ell=0}^{L}\,\frac{N!}{\big(N-(L+\ell)\big)!}\,\frac{(2L)!}{(2\ell)!\,(L-\ell)!\,2^{L-\ell}}\;(2\ell-1)!!\,\Big(\frac{\beta}{1-\beta}\Big)^{\ell}\,N^{-\ell}\nt
\approx~& \sum_{\ell=0}^{L}\,\frac{(2L)!}{(2\ell)!\,(L-\ell)!\,2^{L-\ell}}\;(2\ell-1)!!\,\Big(\frac{\beta}{1-\beta}\Big)^{\ell}\nt
=~& \frac{(2L)!}{L!\,2^{L}}\;\sum_{\ell=0}^{L}\,\frac{L!}{(L-\ell)!\,\ell!}\,\Big(\frac{\beta}{1-\beta}\Big)^{\ell}\nt
=~&(2L-1)!!\;\Big(\frac{1}{1-\beta}\Big)^{L}~=~(K-1)!! \;\Big(\frac{1}{1-\beta}\Big)^{K/2}\,,
\end{align}
which are the moments $m_{K}\left(\sN\left(0,\eh[1-\beta]\right)\right)$ of a normal distribution with mean zero and variance $\eh[1-\beta]$ for even $K$.

\section{Proof of Theorem \ref{thm:nclt}}\label{sec:nclt}
To prove Theorem \ref{thm:nclt} we have to estimate

\begin{align}
     &\eh[N^{\frac{3}{4}K}]\;\IE_{1,N}
\left(\left(\sum_{i=1}^{N} X_{i}\right)^{K}\right)\\
~=~&\eh[N^{\frac{3}{4}K}]\sum_{r=0}^{K-1}
\sum_{\ub\in W_{K,N}(r)} \IE_{1,N}\,\Big(X_{i_{1}}\cdot X_{i_{2}}\cdot\ldots\cdot X_{i_{K}}\Big)\label{eq:nclt1}\\
~+~&\eh[N^{\frac{3}{4}K}]\sum_{\ub\in W_{K,N}(K)} \IE_{1,N}\,\Big(X_{i_{1}}\cdot X_{i_{2}}\cdot\ldots\cdot X_{i_{K}}\Big)\label{eq:nclt2}\,.
\end{align}
Due to Theorem \ref{thm:Corr} equation \eqref{eq:CorrCW2} and estimate \eqref{eq:comb1} the term \eqref{eq:nclt1} goes to zero. The second term \eqref{eq:nclt2} can be estimated
by Theorem \ref{thm:Corr} equation \eqref{eq:CorrCW2} and \eqref{eq:comb1K}
\begin{align}
  &\eh[N^{\frac{3}{4}K}]\sum_{\ub\in W_{K,N}(K)} \IE_{1,N}\,\Big(X_{i_{1}}\cdot X_{i_{2}}\cdot\ldots\cdot X_{i_{K}}\Big)\nt
  ~\approx~&\eh[N^{\frac{3}{4}K}]\;N^{K}\;\eh[N^{\frac{1}{4}}]\,\frac{\int t^{\ell}\;e^{-\frac{1}{12}t^{4}}\,dt}{\int \;e^{-\frac{1}{12}t^{4}}\,dt}\,.
\end{align}
This gives the result.

\section{Appendix}\label{sec:app}

In this section we give a rough sketch of a proof of Theorem \ref{pro:Laplace}, details to justify the approximations made below can be found in \cite{Olver} or \cite{Kirsch-Mom}.

Without loss of generality we may assume that $t_{0}=0$. To approximate the left hand side of \eqref{eq:Laplace} we make a Taylor expansion $F(t)~\approx~\eh[m!] F^{(m)}(0)\,t^{m}$.
We obtain
\begin{align}
   &\int_{-\infty}^{+\infty} e^{-N\, F(t)}~~ t^{\ell}\, \varphi(t)\, dt~\approx~\int_{-\infty}^{+\infty} e^{-N\, \eh[m!]\,F^{(m)}(0)\,t^{m}}~~ t^{\ell}\, \varphi(t)\, dt\nt
   \intertext{setting $s=(NF^{(m)}(0))^{1/m}\,t$ we get}
   &\approx \eh[(NF^{(m)}(0))^{1/m}] \int_{-\infty}^{+\infty} e^{-\eh[m!]\,s^{m}}~~ \Big(\eh[(NF^{(m)}(0))^{1/m}]s\Big)^{\ell}\, \varphi\Big(\eh[(NF^{(m)}(0))^{1/m}]s\Big)\, ds\nt
   &\approx \eh[(NF^{(m)}(0))^{(\ell+1)/m}]\;\varphi(0)\; \int_{-\infty}^{+\infty} e^{-\eh[m!]\,s^{m}}~~ s^{\ell}\,  ds\,.
\end{align}

\end{document}